\newtheorem{theorem}{Theorem}
\newtheorem{lema}{Lemma}
\newtheorem{prop}{Proposition}
\newtheorem{cor}{Corollary}
\newtheorem{remark}{Remark}
\newcommand{\R}{\mathbb{R}}
\newcommand{\sfe}{{\mathbb S}^{n-1}}
\newcommand{\ms}{{\mathcal H}^{n-1}}
\def\p{\partial}
\def\det{\mathop\mathrm{det}}
\def\N{\mathbb{N}}
\def\sph{\mathbb{S}^{n-1}}
\def\beq{\begin{equation}}
\def\eeq{\end{equation}}
\begin{document}

\title{Functional inequalities derived from the Brunn--Minkowski
  inequalities for quermassintegrals}
\author{Andrea Colesanti \& Eugenia Saor\'{\i}n G\'{o}mez\footnote{Supported
  by EU Project {\em Phenomena in High Dimensions} MRTN-CT-2004-511953.}}
\date{}

\maketitle
\begin{abstract}
\noindent
We use Brunn--Minkowski inequalities for quermassintegrals
to deduce a family of inequalities of Poincar\'e type on the unit sphere and
on the boundary of smooth convex bodies in the $n$--dimensional Euclidean
space.
\end{abstract}

\bigskip

\noindent
{\em AMS 2000 Subject Classification:} 52A20, 26D10

\section{Introduction}

The main idea of this paper is to use Brunn--Minkowski inequalities
for quermassintegrals to derive a family of inequalities of
Poincar\'e type on the unit sphere and on the boundary of convex
bodies in the $n$--dimensional Euclidean space. This type of
research was initiated in \cite{Colesanti} where the case of the
classic Brunn--Minkowski inequality is considered.

Let $K\subset\R^n$ be a convex body, i.e. a (non--empty) compact
convex set. The quermassintegrals of $K$, denoted by $W_0(K)$,
$W_1(K)$, $\dots,W_n(K)$, arise naturally in the polynomial
expression of the volume of the outer parallel bodies of $K$ given
by the well known {\it Steiner formula}:
$$
{\cal H}^n(K+tB)=\sum_{i=0}^n\,t^i\binom ni\,W_i(K)\,,\quad t\ge0\,.
$$
where $B$ is the unit ball of $\R^n$, $K+tB=\{x+ty\,:\,x\in
K\,,\,y\in B\}$ is the outer parallel body of $K$ at distance
$t\geq 0$ and ${\mathcal H}^n$ is the $n$--dimensional Lebesgue
measure. For a detailed study of quermassintegrals
we refer to \cite[\S 4.2]{Schneider}. Some of the
quermassintegrals have familiar geometric meaning: $W_0(K)$ is the
volume (i.e. the Lebesgue measure) of $K$, while $W_1(K)$ is, up
to a dimensional factor, the surface area of $K$. Each
quermassintegral $W_i$, $i<n$, satisfies a Brunn--Minkowski type
inequality: for every $K$ and $L$ convex bodies and for every
$t\in[0,1]$ we have
\begin{equation}\label{0.01}
W_i((1-t)K+tL)^{1/(n-i)}\ge
(1-t)W_i(K)^{1/(n-i)}+tW_i(L)^{1/(n-i)}\,;
\end{equation}
for $i<n-1$ equality holds if and only if $K$ is homothetic to $L$.
When $i=0$ this is the classic Brunn--Minkowski inequality. In
general, the above inequalities can be obtained as consequences of
the Aleksandrov--Fenchel inequalities (see for instance \cite[\S
6.4]{Schneider}). Inequality (\ref{0.01}) claims that the
functional $W_i^{1/(n-i)}$ is concave in the class of convex
bodies; heuristically, this implies that the {\it second
variation} of this functional, whenever it exists, must be
negative semi--definite. In this paper we try to make this argument
more precise and we study its consequences.

Throughout the paper we use
the notion of elementary symmetric functions of (the eigenvalues of)
symmetric matrices. In our notation, if $A$ is a $N\times N$
real symmetric matrix, for $r\in\{0,1,\dots,N\}$, $S_r(A)$ is the $r$--th
elementary symmetric function of the
eigenvalues of $A$ and $(S^{ij}_r(A))$ is the {\it $r$--cofactor matrix} of $A$;
these notions and their properties are recalled in \S \ref{II}.

If $K\subset\R^n$ is a convex body of class $C^2_+$ (see \S \ref{II} for the definition)
then, for $i<n$,
\begin{equation}
W_i(K)=c(n,i)\int_{\sfe} h_K\,S_{n-i-1}((h_K)_{ij}+h_K \delta_{ij})\,d\ms\,,
\end{equation}
where $c(n,i)$ is a constant and $(h_K)_{ij}$ are the second covariant
derivatives of the support function $h_K$ of $K$ (see formula
(5.3.11) in \cite{Schneider} for the value of $c(n,i)$ and \S 2 for precise
definitions). This integral representation formula allows to
compute explicitly the first and second directional derivatives of
quermassintegrals. Then, imposing the Brunn--Minkowski inequality
(\ref{0.01}) we obtain the following results.

\begin{theorem}\label{teo0.1}
Let $K\subset\R^n$ be a convex body of class $C^2_+$, $\nu$ be its Gauss map and $I\in\{1,\dots,n-1\}$. For
every $\psi\in C^1(\p K)$, if
\begin{equation}
\label{0.02}
\int_{\p K}{\psi S_{I-1}(D\nu)d\mathcal{H}^{n-1}}=0
\end{equation}
then
\begin{equation}\label{0.03}
I\,\int_{\p K}\psi^2{S_{I}(D\nu)d\mathcal{H}^{n-1}}\leq \int_{\p
K}{\left<(S_{I}^{ij}(D\nu))\nabla \psi,(D\nu)^{-1}\nabla \psi
\right>}d\mathcal{H}^{n-1}\,.
\end{equation}
\end{theorem}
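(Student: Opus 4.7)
The plan is to derive \rf{0.03} as the second-variation consequence of the Brunn--Minkowski inequality \rf{0.01} applied to $W_{I-1}$, as anticipated in the introduction. Set $r:=n-I\in\{1,\dots,n-1\}$, and given $\psi\in C^1(\p K)$ pull it back through the Gauss map to $\phi:=\psi\circ\nu^{-1}\in C^1(\sfe)$ (smoothing in $C^2$ if needed and removing the regularity by density at the end). Because $K\in C^2_+$, the matrix $Q_t:=(h_K+t\phi)_{ij}+(h_K+t\phi)\delta_{ij}$ remains positive definite for $|t|$ small, so $h_t:=h_K+t\phi$ is the support function of a convex body $K_t\in C^2_+$. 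From the integral representation,
\[
F(t):=W_{I-1}(K_t)=c(n,I-1)\int_{\sfe}h_t\,S_r(Q_t)\,d\ms,
\]
which is a polynomial of degree $r+1=n-I+1$ in $t$. Since $(1-s)h_{t_0}+sh_{t_1}=h_{(1-s)t_0+st_1}$, the family $\{K_t\}$ is closed under Minkowski convex combinations, so \rf{0.01} for $W_{I-1}$ gives that $F^{1/(r+1)}$ is concave in a neighbourhood of $0$, i.e.\ $(r+1)\,F(0)\,F''(0)\le r\,(F'(0))^2$.

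The next step is to compute $F'(0)$ and $F''(0)$ in terms of $\phi$ and $Q':=\phi_{ij}+\phi\delta_{ij}$, using integration by parts on $\sfe$ together with three standard identities for symmetric functions of a spherical-Hessian-type matrix: the divergence-free property $\sum_i\nabla_iS_r^{ij}(Q_0)=0$ (a Reilly-type identity), Euler's homogeneity $\sum_{ij}(Q_0)_{ij}\,S_r^{ij}(Q_0)=r\,S_r(Q_0)$, and the trace identity $\sum_iS_r^{ii}(Q_0)=(n-r)\,S_{r-1}(Q_0)=I\,S_{n-I-1}(Q_0)$. After manipulation these should yield
\[
F'(0)=c(n,I-1)(r+1)\int_{\sfe}\phi\,S_r(Q_0)\,d\ms,
\]
\[
\frac{F''(0)}{c(n,I-1)(r+1)r}=I\int_{\sfe}\phi^2\, S_{n-I-1}(Q_0)\,d\ms\;-\;\int_{\sfe}\langle S_r^{ij}(Q_0)\nabla\phi,\nabla\phi\rangle\,d\ms.
\]

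It remains to transcribe these spherical integrals as integrals over $\p K$ via the Gauss map. The eigenvalues of $Q_0$ at $\nu(x)$ are the principal radii of curvature (reciprocals of those of $D\nu(x)$), so $S_k(Q_0)|_{\nu(x)}\,K_G(x)=S_{n-1-k}(D\nu(x))$; combined with the Jacobian $K_G$ of $\nu$ in the area formula this gives $\int_{\sfe}\phi\,S_r(Q_0)\,d\ms=\int_{\p K}\psi\,S_{I-1}(D\nu)\,d\ms$ and $\int_{\sfe}\phi^2 S_{n-I-1}(Q_0)\,d\ms=\int_{\p K}\psi^2 S_I(D\nu)\,d\ms$. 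Using $\nabla_{\sfe}\phi=(D\nu)^{-1}\nabla_{\p K}\psi$ (chain rule; $D\nu$ symmetric) and the companion cofactor identity relating $S_r^{ij}(Q_0)=S_{n-I}^{ij}((D\nu)^{-1})$ to $S_I^{ij}(D\nu)$, the gradient integral becomes $\int_{\p K}\langle S_I^{ij}(D\nu)\nabla\psi,(D\nu)^{-1}\nabla\psi\rangle\,d\ms$. Hypothesis \rf{0.02} is exactly $F'(0)=0$; the concavity inequality then reduces to $F''(0)\le 0$, which after pull-back is precisely \rf{0.03}.

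The principal obstacle is the second-variation simplification: starting from
\[
F''(0)=c(n,I-1)\int_{\sfe}\bigl(2\phi\,S_r^{ij}(Q_0)\,Q'_{ij}+h_K\,S_r^{ij,kl}(Q_0)\,Q'_{ij}Q'_{kl}\bigr)\,d\ms,
\]
one must show via iterated integration by parts---using a Reilly-type identity for the second cofactor $\sum_k\nabla_kS_r^{ij,kl}(Q_0)=0$ and the second-order homogeneity relation $\sum_{ij}(Q_0)_{ij}\,S_r^{ij,kl}(Q_0)=(r-1)\,S_r^{kl}(Q_0)$---that the raw expression collapses to the clean quadratic form displayed above. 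A secondary but non-trivial ingredient is the matrix identity $S_r^{ij}(A^{-1})\leftrightarrow S_I^{ij}(A)$ needed for the gradient-term pull-back; checking it in a common eigenbasis of $D\nu$ produces exactly the asymmetric pairing $\langle S_I^{ij}(D\nu)\nabla\psi,(D\nu)^{-1}\nabla\psi\rangle$ appearing in \rf{0.03}.
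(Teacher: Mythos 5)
Your overall strategy is exactly the one the paper follows: apply the Brunn--Minkowski concavity \rf{0.01} to $W_{I-1}$, compute the second variation of $F_{I-1}$, observe that \rf{0.02} is equivalent to $F'(0)=0$, and pull everything back to $\partial K$ via the Gauss map using the cofactor/inverse identities of Proposition~\ref{propII.1}. The change-of-variables bookkeeping you sketch (the relations $S_{I-1}(D\nu)\,\det\Xi^{-1}=S_{n-I}(\Xi^{-1})$, $S_{I}(D\nu)\,\det\Xi^{-1}=S_{n-I-1}(\Xi^{-1})$, $\nabla\phi=(D\nu)^{-1}\nabla\psi$, and the $S_{I}^{ij}(D\nu)\leftrightarrow S_{n-I}^{ij}(\Xi^{-1})$ cofactor identity) is exactly what the paper packages as Lemma~\ref{rhs}.

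However, there is a genuine gap at the crux of the argument, and you explicitly flag it yourself: you write $F''(0)$ via the raw second-order Taylor expansion involving the second cofactor $S_r^{ij,kl}$, and then assert that after iterated integration by parts --- using a Reilly-type identity $\sum_k\nabla_k S_r^{ij,kl}(Q_0)=0$ and a second-order Euler relation --- the expression ``should'' collapse to the desired quadratic form. That collapse is not carried out, and the auxiliary identity $\sum_k\nabla_k S_r^{ij,kl}(Q_0)=0$ is not established (it is not Lemma~\ref{lemmaII.1}, which concerns only the first cofactor). In effect you have turned the key computational step of the theorem into an unproved claim. The paper avoids this entirely by a cleaner route: Proposition~\ref{propIV.1} computes the \emph{first} derivative in closed form for all $s$, namely $f'(s)=(n-k)\int_{\sfe}\phi\,S_{n-k-1}(\Xi_s^{-1})\,d\ms$, after a single integration by parts using Lemma~\ref{lemmaII.1} and Euler's identity (both for the first cofactor only). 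Differentiating this closed form once more immediately yields Proposition~\ref{propIV.2} with no second cofactor ever appearing. That is the observation you are missing, and without it your outline does not close. (There is also a minor spurious factor of $r$ in your displayed formula for $F''(0)$, but that does not affect the sign argument; the real issue is the unverified cancellation.)
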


\begin{theorem}\label{teo0.2}
Let $h$ be the support function of a convex body $K\subset\R^n$ of class
$C^2_+$ and $J\in\{1,\dots,n-1\}$. For every $\phi\in C^1(\sfe)$, if
\beq\label{0.1}
\int_{\sph}{\phi S_{J}(h_{ij}+h\delta_{ij})
d\mathcal{H}^{n-1}}=0
\eeq
then
\begin{equation}\label{0.1b}
(n-J)\int_{\sph}{\phi^2 S_{J-1}(h_{ij}+h\delta_{ij})}d\mathcal{H}^{n-1}\leq
\int_{\sph}\left<(S_{J}^{ij}(h_{ij}+h\delta_{ij}))\nabla\phi,\nabla\phi\right>d\mathcal{H}^{n-1}\,.
\end{equation}
\end{theorem}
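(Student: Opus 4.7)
The plan is to apply the Brunn--Minkowski inequality \rf{0.01} to a one--parameter family of convex bodies obtained by perturbing the support function $h$, with the quermassintegral index chosen as $i:=n-J-1$ so that the integral representation for $W_i$ involves $S_J$. Set $h_t:=h+t\phi$; since $K$ is $C^2_+$, the symmetric matrix $A^{(t)}_{ij}:=(h_t)_{ij}+h_t\delta_{ij}$ remains positive definite for $|t|$ sufficiently small, hence $h_t$ is the support function of a $C^2_+$ convex body $K_t$. Define
$$
V(t):=W_i(K_t)=c(n,i)\int_{\sph}h_t\,S_J(A^{(t)})\,d\ms.
$$
By \rf{0.01}, $t\mapsto V(t)^{1/(J+1)}$ is concave, so $V'(0)=0$ forces $V''(0)\le 0$.

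The key computational step is to simplify the first two derivatives of $V$ using properties of elementary symmetric functions recalled in \S\ref{II}. Introduce the second--order linear operator $L_t\psi:=S_J^{ij}(A^{(t)})(\psi_{ij}+\psi\delta_{ij})$ on $\sph$. Because $A^{(t)}$ comes from a support function, its $J$--cofactor matrix is divergence--free ($\nabla_i S_J^{ij}(A^{(t)})=0$), which makes $L_t$ formally self--adjoint on $\sph$; moreover, Euler's identity for the homogeneous polynomial $S_J$ gives $L_t h_t=J\,S_J(A^{(t)})$. Differentiating $V(t)$ once and then applying self--adjointness together with Euler's identity yields
$$
V'(t)=c(n,i)(J+1)\int_{\sph}\phi\,S_J(A^{(t)})\,d\ms,\qquad V''(t)=c(n,i)(J+1)\int_{\sph}\phi\,L_t\phi\,d\ms.
$$
Thus \rf{0.1} is exactly the condition $V'(0)=0$, and the concavity of $V^{1/(J+1)}$ gives $\int_{\sph}\phi\,L_0\phi\,d\ms\le 0$.

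To recover \rf{0.1b}, use the standard identity $\sum_i S_J^{ii}(A)=(n-J)\,S_{J-1}(A)$ for the trace of the $J$--cofactor matrix of the $(n-1)\times(n-1)$ matrix $A=(h_{ij}+h\delta_{ij})$, which expands $L_0\phi=S_J^{ij}(A)\phi_{ij}+(n-J)S_{J-1}(A)\phi$. One integration by parts on $\sph$, again using $\nabla_i S_J^{ij}(A)=0$, gives
$$
\int_{\sph}\phi\,L_0\phi\,d\ms=-\int_{\sph}\langle(S_J^{ij}(A))\nabla\phi,\nabla\phi\rangle\,d\ms+(n-J)\int_{\sph}\phi^2\,S_{J-1}(A)\,d\ms,
$$
and the non--positivity of the left--hand side is precisely \rf{0.1b}. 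The main obstacle is establishing the two algebraic ingredients on which the variational calculation rests --- the Codazzi/divergence--free property of $(S_J^{ij}(A^{(t)}))$ and the Euler--type identities $L_t h_t=J S_J(A^{(t)})$, $\sum_i S_J^{ii}(A)=(n-J)S_{J-1}(A)$ --- both of which are classical and should be recorded in \S\ref{II}. A minor point is that the calculation naturally requires $\phi\in C^\infty$; one then recovers the stated $C^1$ hypothesis by approximation, since \rf{0.1b} depends continuously on $\phi$ and $\nabla\phi$.
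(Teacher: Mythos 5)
Your argument is correct and follows essentially the same path as the paper: perturb $h$ to $h+t\phi$, use the Brunn--Minkowski inequality for $W_{n-J-1}$ to get concavity of the $(J+1)$--th root, identify \rf{0.1} with the vanishing of the first derivative, and then unwind the second--derivative inequality via the divergence--free property of the cofactor matrix (the paper's Lemma~\ref{lemmaII.1}), Euler's identity (Proposition~\ref{propII.1}~(iv)), and the trace identity (Proposition~\ref{propII.1}~(v)). The paper carries out the computation of $f'$ and $f''$ in separate propositions before assembling them, but the content and the three algebraic ingredients you isolate are exactly those it uses.
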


Theorems \ref{teo0.1} and \ref{teo0.2} are the two faces of the same coin; they can
be obtained one from each other by the change of variable provided by the
Gauss map. The cases $I=1$ of Theorem \ref{teo0.1} and $J=n-1$ of
Theorem \ref{teo0.2} were already proved in \cite{Colesanti}, as consequences
of the classic Brunn--Minkowski inequality. Another proof of Theorems
\ref{teo0.1} and \ref{teo0.2} in these special cases, based on a functional inequality due to
Brascamp and Lieb (see \cite{Brascamp-Lieb}), was communicated to us by
Cordero--Erausquin (\cite{Cordero}).

One way to look at (\ref{0.02})--(\ref{0.03}) and (\ref{0.1})--(\ref{0.1b}) is as inequalities
of Poincar\'e type, where a weighted $L^2$--norm of a function is
bounded by a weighted $L^2$--norm of its gradient, under a zero--mean type
condition. In particular, choosing $K=B$ (the unit ball) in Theorem
\ref{teo0.1}, or equivalently $h\equiv1$ in Theorem \ref{teo0.2}, we recover
the usual Poincar\'e inequality on $\sfe$ with the optimal constant:
\begin{equation}
\label{0.1c}
\int_{\sfe}\phi(x)\, d\mathcal{H}^{n-1}(x)=0\;\Rightarrow\;
\int_{\sfe}\phi^2(x)d\mathcal{H}^{n-1}(x)\le
\frac{1}{n-1}\int_{\sfe}|\nabla\phi(x)|^2d\mathcal{H}^{n-1}(x)\,.
\end{equation}

We also note that inequalities (\ref{0.03}) and (\ref{0.1b}), under side conditions
(\ref{0.02}) and (\ref{0.1}) respectively, are optimal. This fact, proved in
Remark \ref{ossIII.1}, \S \ref{IV}, is a simple consequence of the invariance of
quermassintegrals under translations.

When $J=1$ we can remove the smoothness assumption on $K$ (or
equivalently on $h$) in Theorem \ref{teo0.2}. Indeed we have
$S_{J-1}=S_0\equiv1$ and $S_{1}^{ij}(h_{ij}+h\delta_{ij})=\delta_{ij}$.
Moreover $S_{1}(h_{ij}+h\delta_{ij})
d\mathcal{H}^{n-1}=[\Delta\,h+(n-1)h]d\mathcal{H}^{n-1}$ can be replaced by
$dA_1(K,\cdot)$, where $A_1(K,\cdot)$
denotes the {\it area measure of order one} of $K$ (see \S \ref{IV} for the definition).

\begin{theorem}\label{teo0.3} Let $K\subset\R^n$ be a convex body and let
  $A_1(K,\cdot)$ be its area measure of order one. For every $\phi\in
  C^1(\sfe)$, if
\begin{equation}\label{0.2}
\int_{\sfe}\phi(x)\, dA_1(K,x)=0\,,
\end{equation}
then
\begin{equation*}
\int_{\sfe}\phi^2(x)d\mathcal{H}^{n-1}(x)\le
\frac{1}{n-1}\int_{\sfe}|\nabla\phi(x)|^2d\mathcal{H}^{n-1}(x)\,.
\end{equation*}
\end{theorem}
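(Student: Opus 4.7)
The plan is to reduce to the smooth case $J=1$ of Theorem \ref{teo0.2} by approximating $K$ with bodies of class $C^2_+$, and to restore the mean--zero side condition along the approximating sequence by correcting the test function $\phi$ by a constant (which does not affect $\nabla\phi$).

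First I would pick a sequence $(K_\varepsilon)$ of convex bodies of class $C^2_+$ converging to $K$ in the Hausdorff distance as $\varepsilon\to 0$; this is classical (e.g., convolve $h_K$ with a smooth radial mollifier on $\sfe$ and add an $\varepsilon$--ball; see \cite[\S 3.4]{Schneider}). Denoting the support function of $K_\varepsilon$ by $h_\varepsilon$, the area measure $A_1(K_\varepsilon,\cdot)$ is absolutely continuous on $\sfe$ with density $S_1((h_\varepsilon)_{ij}+h_\varepsilon\delta_{ij})=\Delta h_\varepsilon+(n-1)h_\varepsilon$, and by the weak continuity of $K\mapsto A_1(K,\cdot)$ we have $A_1(K_\varepsilon,\cdot)\to A_1(K,\cdot)$ weakly on $\sfe$.

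Given $\phi\in C^1(\sfe)$ satisfying \rf{0.2}, I would set
$$
c_\varepsilon:=\frac{1}{A_1(K_\varepsilon,\sfe)}\int_{\sfe}\phi\,dA_1(K_\varepsilon,\cdot),
$$
which is well defined for small $\varepsilon$ provided $K$ is not a single point (the only degenerate case, in which $A_1(K,\cdot)\equiv 0$ and the statement must be excluded). By construction $\phi-c_\varepsilon$ satisfies the side condition \rf{0.1} of Theorem \ref{teo0.2} for $K_\varepsilon$, and since $S_0\equiv 1$, $S_1^{ij}(\cdot)=\delta_{ij}$, and $\nabla(\phi-c_\varepsilon)=\nabla\phi$, that theorem with $J=1$ yields
$$
(n-1)\int_{\sfe}(\phi-c_\varepsilon)^2\,d\ms\le\int_{\sfe}|\nabla\phi|^2\,d\ms.
$$

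To finish, I would pass to the limit $\varepsilon\to 0$: weak convergence of $A_1(K_\varepsilon,\cdot)$ applied to the continuous function $\phi$, combined with \rf{0.2}, gives $c_\varepsilon\to 0$; expanding $(\phi-c_\varepsilon)^2$ one sees that the left--hand side of the displayed inequality tends to $(n-1)\int_{\sfe}\phi^2\,d\ms$ while the right--hand side is unchanged, and the desired inequality follows. The main conceptual ingredient is really the constant shift that restores the mean--zero condition along the smooth approximants; the remaining tools (smooth approximation of convex bodies and Hausdorff continuity of $A_1(K,\cdot)$) are standard from convex geometry, so I do not anticipate any serious obstacle beyond locating the right references.
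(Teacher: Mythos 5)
Your proof is correct, and its skeleton coincides with the paper's: approximate $K$ by bodies $K_r$ of class $C^2_+$, invoke the smooth-case machinery, and pass to the limit using the weak convergence $A_1(K_r,\cdot)\to A_1(K,\cdot)$ together with $W_{n-2}(K)>0$ (both proofs must exclude the degenerate case where $A_1(K,\cdot)$ vanishes, i.e.\ $K$ a point, which you rightly flag). The one genuine difference is how you handle the failure of the side condition along the approximating sequence. The paper does not apply Theorem \ref{teo0.2} as a black box: it goes back to the concavity of $\sqrt{f_r}$ and keeps the full inequality $2f_r(0)f_r''(0)\le (f_r'(0))^2$, then shows that the right-hand side $\bigl((n-1)\int_{\sfe}\phi\,dA_1(K_r,\cdot)\bigr)^2$ tends to $0$ by hypothesis \rf{0.2}. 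You instead restore $f_r'(0)=0$ exactly for each approximant by subtracting the constant $c_\varepsilon$, apply Theorem \ref{teo0.2} with $J=1$ directly, and let $c_\varepsilon\to0$. Your device is more modular and exploits the special feature of $J=1$ that both weights ($S_0\equiv1$ and $S_1^{ij}=\delta_{ij}$) are independent of $h$ and that the gradient is unaffected by the shift; the paper's version is marginally more self-contained (it never needs the corrected test function) and its ``keep the $(f')^2$ term'' mechanism is the one that would generalize if one wanted an approximation argument for other values of $J$, where the constant-shift trick would force you to control the convergence of $S_{J-1}$ and $S_J^{ij}$ as well. Either way the result follows; your argument is sound.
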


Hence Theorem \ref{teo0.3} extends the usual Poincar\'e
inequality (\ref{0.1c}) on $\sfe$ when the zero--mean condition is
replaced by (\ref{0.2}). For $n=2$ this leads to an extension
of the well known {\it Wirtinger
inequality}, stated in Corollary \ref{cor dim2} of \S \ref{V}. In higher
dimension Theorem \ref{teo0.3} together with some recent developments on the
Christoffel problem (\cite{Guan-Ma}, \cite{Sheng-Trudinger-Wang}) leads to the
following result.

\begin{theorem}\label{teo0.4}
Let $K\subset\R^n$ be a convex body containing the origin in its interior, such that
\begin{equation}\label{0.4}
\int_{\sfe}x\rho_K(x)\,d\mathcal{H}^{n-1}(x)=0\,,
\end{equation}
where $\rho_K$ is the radial function of $K$. Then, for every $\phi\in C^1(\mathbb{S}^{n-1})$,
$$
\int_{\sfe}\phi(x)\rho_K(x)\,d\mathcal{H}^{n-1}(x)=0
\;\Rightarrow\;
\int_{\sfe}\phi^2(x)d\mathcal{H}^{n-1}(x)\le
\frac{1}{n-1}\int_{\sfe}|\nabla\phi(x)|^2d\mathcal{H}^{n-1}(x)\,.
$$
\end{theorem}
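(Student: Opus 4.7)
The strategy is to reduce Theorem \ref{teo0.4} to Theorem \ref{teo0.3} by exhibiting a convex body whose first area measure on $\sfe$ has density $\rho_K$ with respect to $\mathcal{H}^{n-1}$. Finding such a body is precisely the Christoffel problem: prescribe a finite positive Borel measure $\mu$ on $\sfe$ and ask for a convex body $L$ with $A_1(L,\cdot)=\mu$. A classical necessary condition is that $\mu$ have vanishing first moment, $\int_{\sfe} x\,d\mu(x)=0$, and in our setting this is exactly the hypothesis (\ref{0.4}) on $\rho_K$.

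First, I would invoke the recent results on the Christoffel problem developed in \cite{Guan-Ma} and \cite{Sheng-Trudinger-Wang} to produce a convex body $L\subset\R^n$ with
$$
dA_1(L,\cdot)=\rho_K\,d\mathcal{H}^{n-1}\quad\text{on }\sfe\,.
$$
This should be possible since $\rho_K$ is a continuous positive function on $\sfe$ (because $K$ contains the origin in its interior) and (\ref{0.4}) holds. If the cited existence results are formulated only under stronger regularity, one would first approximate $\rho_K$ uniformly by smooth positive densities $\rho_k$, adjusted so that $\int_{\sfe}x\rho_k\,d\mathcal{H}^{n-1}=0$, solve the Christoffel problem for each, and pass to the Hausdorff limit using the weak continuity of $A_1$ in its first argument; since Theorem \ref{teo0.3} imposes no smoothness on $K$, the limiting convex body is admissible there.

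Second, I would apply Theorem \ref{teo0.3} to the body $L$ so produced: the hypothesis $\int_{\sfe}\phi\,\rho_K\,d\mathcal{H}^{n-1}=0$ of Theorem \ref{teo0.4} becomes exactly the zero--mean condition (\ref{0.2}) for $L$, and the conclusion of Theorem \ref{teo0.3} is precisely the Poincar\'e--type inequality claimed. The whole weight of the argument therefore rests on the Christoffel step, which is the main obstacle: one must confirm that \cite{Guan-Ma} and \cite{Sheng-Trudinger-Wang}, possibly combined with a regularisation/compactness argument, yield solvability of the Christoffel problem under the natural hypotheses satisfied by $\rho_K$, namely continuity, strict positivity, and the barycentre condition (\ref{0.4}). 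Once the existence of $L$ is secured, the deduction of Theorem \ref{teo0.4} from Theorem \ref{teo0.3} is a one--line translation.
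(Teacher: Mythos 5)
Your high-level plan---find a convex body $L$ with $dA_1(L,\cdot)=\rho_K\,d\mathcal{H}^{n-1}$ and then apply Theorem~\ref{teo0.4} via Theorem~\ref{teo0.3}---is the right one and coincides with the paper's. But there is a genuine gap in how you propose to carry out the Christoffel step, and it is exactly the place you flag as ``the main obstacle.'' You assert that solvability should follow from $\rho_K$ being continuous, strictly positive, and satisfying the barycentre condition (\ref{0.4}). Those hypotheses alone are \emph{not} sufficient: the Firey--Berg characterisation of first area measures is considerably more restrictive, and the Guan--Ma / Sheng--Trudinger--Wang existence theorem (stated in the paper as Theorem~\ref{teoV.1}) requires, in addition to the barycentre condition, that the reciprocal $g=1/f$ of the prescribed density satisfy $(g_{ij}+g\delta_{ij})\ge 0$ almost everywhere, i.e.\ that $g$ be (essentially) a support function. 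Your regularisation scheme---approximating $\rho_K$ by arbitrary smooth positive densities $\rho_k$ with zero barycentre---does not address this structural hypothesis at all, so there is no reason for the Christoffel problem to be solvable for the $\rho_k$.

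The missing idea, which is the heart of the paper's proof, is the polar body. Writing $H=K^\circ$ one has $\rho_K=1/h_H$ on $\sfe$, so the ``reciprocal'' $g=1/\rho_K=h_H$ is itself a support function; after approximating $H$ by $C^2_+$ bodies $H_r$ and setting $f_r=1/h_{H_r}$, the matrix $((h_{H_r})_{ij}+h_{H_r}\delta_{ij})$ is automatically positive definite, so Theorem~\ref{teoV.1} applies and yields bodies $L_r$ with $dA_1(L_r,\cdot)=f_r\,d\mathcal{H}^{n-1}$. Uniform two-sided bounds on $f_r$ (coming from $0<c<h_H<C$), Lemma~3.1 of \cite{Guan-Ma}, and the Blaschke selection theorem then give a limiting body $L$ with $dA_1(L,\cdot)=\rho_K\,d\mathcal{H}^{n-1}$, and Theorem~\ref{teo0.3} finishes. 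Without recognising $\rho_K$ as the reciprocal of the polar support function, the verification of the Guan--Ma hypothesis simply cannot be made, and the ``one-line translation'' at the end never gets off the ground.
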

Note that condition (\ref{0.4}) is fulfilled when $K$ is centrally symmetric.

\bigskip

\noindent{\bf Acknowledgment.} We would like to thank L. Al\'{\i}as Linares
for his precious help in the proof of Lemma \ref{lemmaII.1}.

\section{Preliminaries}\label{II}

\subsection{Elementary symmetric functions}
Let $N$ be an integer; for a $N\times N$ symmetric matrix
$A=(a_{ij})$ having eigenvalues $\lambda_1,\dots,\lambda_N$, and
for $k\in\{0,1,\dots,N\}$ we define
the $k$--th elementary symmetric function of the
eigenvalues of $A$ as follows
\beq
S_k(A)=
\sum_{1\leq i_1<\dots<i_k\leq
N}{\lambda_{i_1}\cdots\lambda_{i_k}}\,,\quad{\rm if}\;k\ge1\,,
\eeq
and $S_0(A)=1$. In particular $S_1(A)$ and $S_{N}(A)$
are the trace and the determinant of $A$, respectively.
If $A$ and $k$ are as
above and $i,j\in\{1,\dots,N\}$, we set
\begin{equation*}
S^{ij}_k(A)=\frac{\partial S_k(A)}{\partial a_{ij}}\,.
\end{equation*}
The matrix $(S^{ij}_k(A))$ is also symmetric.  The usual
cofactor matrix happens when $k=N$ in $(S^{ij}_k(A))$, so
$(S^{ij}_k(A))$ can be considered as a {\it $k$--th cofactor matrix
of $A$}. Note that $(S^{ij}_1(A))$ is the identity matrix. In the sequel we
will use some properties of elementary symmetric functions of
matrices that, for convenience, we gather in the following
statement; for the proof we refer the reader to \cite{Reilly} and
\cite[Chapter 1]{Salani}.

\begin{prop}\label{propII.1} In the notation introduced above the following
  facts hold
%
\begin{itemize}
\item[i)] $A$ is diagonal if and only if $(S^{ij}_k(A))$ is diagonal;
\item[ii)] the eigenvalues of $(S^{ij}_k(A))$ are given by
$$
\Lambda_s=S_{k-1}({\rm
diag}(\lambda_1,\dots,\lambda_{s-1},\lambda_{s+1},\dots,\lambda_{N}))\,,\quad
s=1,\dots,N\,,
$$
where $\lambda_1,\dots,\lambda_{N}$ are the eigenvalues of $A$;
\item[iii)] if $A$ is non--singular then
$$
\frac{1}{{\rm det}(A)}S_{k}(A)=S_{N-k}(A^{-1})\,;
$$
\item[iv)]
\begin{equation}
\label{I.1}
S_k(A)=\frac{1}{k}\sum_{i,j=1}^N S^{ij}_k(A)a_{ij}\,;
\end{equation}
\item[v)]
\begin{equation}\label{I.2}
{\rm trace}(S^{ij}_k(A))=(N-(k-1))S_{k-1}(A)\,.
\end{equation}
\end{itemize}
\end{prop}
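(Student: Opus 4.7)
My plan rests on orthogonal invariance together with a direct computation in a diagonal frame. First I would observe that $S_k(A)$ depends on $A$ only through its eigenvalues, so $S_k(PAP^{T})=S_k(A)$ for every orthogonal $P$; differentiating this identity in $A$ (viewing $S_k$ as a smooth function on the space of symmetric matrices equipped with the Frobenius inner product) yields the covariance relation $(S^{ij}_k(PAP^{T}))=P\,(S^{ij}_k(A))\,P^{T}$. Using this, the pointwise statements (i) and (ii) can be reduced to the case $A=\mathrm{diag}(\lambda_1,\dots,\lambda_N)$.

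Next I would write $S_k(A)=\sum_{|I|=k}\det(A_I)$, the sum of principal $k\times k$ minors, and expand each minor via permutations, $\det(A_I)=\sum_{\sigma\in\mathrm{Sym}(I)}\mathrm{sgn}(\sigma)\prod_{i\in I}a_{i\sigma(i)}$. For $p\neq q$, any monomial containing $a_{pq}$ requires $\sigma(p)=q$; after differentiating and evaluating at the diagonal $A$, the remaining factors $\prod_{i\in I\setminus\{p\}}a_{i\sigma(i)}$ survive only when $\sigma$ fixes every $i\in I\setminus\{p\}$, which is impossible for a bijection of $I$ sending $p$ to $q\neq p$. Hence $S^{pq}_k(A)=0$, proving the forward direction of (i); the same expansion yields $S^{pp}_k(A)=\sum_{I\ni p,\,|I|=k}\prod_{i\in I\setminus\{p\}}\lambda_i$, which is exactly (ii). For the converse in (i) I would run the covariance argument in reverse: if $(S^{ij}_k(A))$ is diagonal with pairwise distinct entries, any orthogonal $P$ that diagonalises $A$ must then be a signed permutation matrix, forcing $A$ itself to be diagonal in the original basis; degenerate spectra are handled by a perturbation-and-continuity argument.

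Property (iii) follows at once from the fact that the eigenvalues of $A^{-1}$ are $1/\lambda_i$, together with a reindexing of $k$-subsets as complements of $(N-k)$-subsets. Property (iv) is Euler's homogeneous function theorem applied to $S_k$, which is a homogeneous polynomial of degree $k$ in the entries $a_{ij}$ (being a sum of $k\times k$ minors). For (v), combining (ii) with a counting argument gives $\mathrm{trace}\,(S^{ij}_k(A))=\sum_{s=1}^{N} S_{k-1}(\mathrm{diag}(\lambda_1,\dots,\widehat{\lambda_s},\dots,\lambda_N))$, and each $(k-1)$-term monomial in the eigenvalues appears in exactly $N-(k-1)$ of these summands, yielding $(N-k+1)S_{k-1}(A)$. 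The step I expect to be most delicate is the converse direction of (i) in the case of repeated eigenvalues, where the covariance argument loses its edge and must be supplemented by continuity; all the other items become routine bookkeeping once the diagonal reduction is in hand.
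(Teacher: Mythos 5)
The paper does not actually prove Proposition~\ref{propII.1}; it refers the reader to Reilly and to Salani's thesis. Your proposal therefore supplies a proof where the paper offers only a citation, and the route you take --- the orthogonal covariance $(S^{ij}_k(PAP^{T}))=P\,(S^{ij}_k(A))\,P^{T}$ obtained by differentiating the spectral invariance of $S_k$, reduction to the diagonal case, direct expansion of principal minors over permutations, Euler's identity for homogeneous polynomials for (iv), and the counting of $(k-1)$--monomials for (v) --- is the standard one and is carried out correctly for parts (ii), (iii), (iv), (v) and for the forward implication of (i) ($A$ diagonal $\Rightarrow$ $(S^{ij}_k(A))$ diagonal).

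The one genuine gap is in the converse of (i), and it is not merely a delicate step: the converse is false as stated, so the ``perturbation--and--continuity'' repair you propose for degenerate spectra cannot succeed. For $k=1$ one has $(S^{ij}_1(A))=I$ for every symmetric $A$ (the paper records this immediately before the Proposition), so diagonality of the first cofactor matrix imposes no constraint whatsoever on $A$. More generally, for $k\ge 3$ and $A$ of rank one, say $A=vv^{T}$, each summand
$$
\delta\binom{i,i_1,\dots,i_{k-1}}{j,j_1,\dots,j_{k-1}}\,a_{i_1j_1}\cdots a_{i_{k-1}j_{k-1}}
=\delta\binom{i,i_1,\dots,i_{k-1}}{j,j_1,\dots,j_{k-1}}\,v_{i_1}\cdots v_{i_{k-1}}v_{j_1}\cdots v_{j_{k-1}}
$$
vanishes upon summation because the generalized Kronecker symbol is antisymmetric in $(i_1,\dots,i_{k-1})$ while the $v$--factor is symmetric; hence $(S^{ij}_k(A))=0$ is diagonal although $A$ need not be. Your argument via signed permutation matrices works precisely when $(S^{ij}_k(A))$ has simple spectrum, and the obstruction in the remaining cases is not a removable degeneracy of $A$ but a genuine failure of the implication. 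In fact only $k=2$ gives an unconditional ``iff'' (there $(S^{ij}_2(A))=(\mathrm{tr}\,A)I-A$). Since the paper uses only the forward direction (implicitly in Lemma~\ref{rhs}, where diagonality of $\Xi^{-1}$ is transferred to its cofactor matrices), the right move is to prove only that direction --- as you have done --- and to note that the ``only if'' in the Proposition is an overstatement, rather than to try to fill a hole that cannot be filled.
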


\subsection{Convex bodies and quermassintegrals}
We denote by ${\mathcal K}^n$
the set of convex bodies in $\R^n$. In this paper we will use several results
concerning convex bodies, for the proof of these results we refer the reader to
\cite{Schneider}. To every $K\in{\mathcal K}^n$ we can associate
its {\it support function} $h_K$
$$
h_K\,:\,\mathbb{S}^{n-1}\to\R\,,\quad
h_K(u)=\sup\bigl\{\langle x,u\rangle:x\in K\bigr\},
$$
(see e.g. \cite[\S 1.7]{Schneider}). Note that in the present
paper the support function is defined on the unit sphere
$\mathbb{S}^n$ and we do not consider its homogeneous extension to
the whole space $\R^n$. $K$ is said to be of class $C^2_+$ if
$\partial K\in C^2$ and the Gauss curvature is strictly positive
at each point of $\partial K$. If $K$ is of class $C^2_+$ we
denote by $\nu_K$ its Gauss map: for every $x\in \partial K$,
$\nu_K(x)$ is the outer unit normal vector to $K$ at $x$. When the
body $K$ is clear from the context, we just write $h$ and $\nu$
instead of $h_K$ and $\nu_K$ respectively. If $K$ is of class $C^2_+$, then
$\nu_K$
establishes a diffeomorphism between $\partial K$
and $\sfe$ and its differential $D\nu_K$ is the {\it Weingarten
map} of $\partial K$. The matrix associated with the linear map
$D(\nu^{-1}) $ is $(h_{ij}+h \delta_{ij})$ where for
$i,j=1,\dots,n-1$, $h_i$ and $h_{ij}$ denote respectively the
first and second covariant derivatives of $h$ with respect to an
orthonormal frame on $\sfe$ and $\delta_{ij}$ is the standard
Kronecker symbol. In other words $(h_{ij}+h \delta_{ij})$ is the
matrix of the reverse second fundamental form of $\partial K$. For
brevity, in the sequel we will adopt the notation:
$$
(h_{ij}+h \delta_{ij})=\Xi^{-1}\,.
$$
In particular, if $K$ is of class $C^2_+$ then $\Xi^{-1}$ is positive definite
on $\mathbb{S}^{n-1}$ and its eigenvalues are the principal radii of curvature
of $K$. Conversely, if  $h\in C^2(\sfe)$ and
the matrix $(h_{ij}+h \delta_{ij})$ is positive definite at each point of
$\sfe$, then $h$ is the support function of a (uniquely determined) convex
body $K$ of class $C^2_+$. Hence the set
\[
\mathcal{C}=\{h\in C^2(\sph):(h_{ij}+h\delta_{ij})>0\;\mbox{on $\sfe$}\}
\]
consists of support functions of convex bodies of class $C^2_+$.

When $K$ is of class $C^2_+$, the quermassintegrals of $K$ can be
expressed as integrals involving the support function $h_K$ of
$K$. In fact, for $i\in\{0,1,\dots,n-1\}$,
\begin{equation}\label{I.0}
W_i(K)=\frac{1}{n}\,{\binom{n-1}{n-i-1}}^{-1}\int_{\sfe}
h_K\,S_{n-i-1}(\Xi^{-1})\,d\ms
\end{equation}
(see formula (5.3.11) in \cite{Schneider}). Note that for
$K,L\in{\mathcal K}^n$ and $t\in[0,1]$ we have
$$
h_{(1-t)K+tL}=(1-t)h_K+th_L\,.
$$
From the above facts and inequality (\ref{0.01}) we deduce the
following result.

\begin{prop}\label{BM for quermass}For $i\in\{0,1,\dots,n-1\}$
define the functional
$$
F_i\,:\,{\mathcal C}\to\R_+\,,\quad F_i(h)=\int_{\sfe}
h\,S_{n-i-1}(\Xi^{-1})\,d\ms\,.
$$
Then $(F_i)^{1/(n-i)}$ is concave in $\mathcal{C}$.
\end{prop}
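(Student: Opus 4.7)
The plan is to recognize that Proposition \ref{BM for quermass} is essentially a transcription of the Brunn--Minkowski inequality (\ref{0.01}) into the language of support functions, so the task reduces to checking that the correspondence $h\leftrightarrow K_h$ behaves well under convex combinations inside $\mathcal{C}$.

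First I would verify that $\mathcal{C}$ is a convex cone in $C^2(\sfe)$. Since the map $h\mapsto(h_{ij}+h\delta_{ij})$ is \emph{linear} in $h$, and since the set of positive definite symmetric matrices is a convex cone, it follows that for $h_0,h_1\in\mathcal{C}$ and $t\in[0,1]$ the function $h_t=(1-t)h_0+th_1$ still satisfies $(h_{t,ij}+h_t\delta_{ij})>0$ on $\sfe$, so $h_t\in\mathcal{C}$. By the characterization recalled just before the statement, $h_t$ is then the support function of a uniquely determined convex body $K_t$ of class $C^2_+$, and from $h_{(1-t)K+tL}=(1-t)h_K+th_L$ one identifies $K_t=(1-t)K_{h_0}+tK_{h_1}$.

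Next I would use the integral representation (\ref{I.0}) to rewrite $F_i$ in geometric terms: for every $h\in\mathcal{C}$,
\[
F_i(h)=n\binom{n-1}{n-i-1}\,W_i(K_h).
\]
Applying the Brunn--Minkowski inequality (\ref{0.01}) for quermassintegrals to $K_{h_0}$ and $K_{h_1}$, and then pulling the constant $[n\binom{n-1}{n-i-1}]^{1/(n-i)}$ through the $(n-i)$--th root, I get immediately
\[
F_i(h_t)^{1/(n-i)}\ge(1-t)F_i(h_0)^{1/(n-i)}+tF_i(h_1)^{1/(n-i)},
\]
which is the concavity claim.

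There is no real obstacle: the only point that could trip one up is the need to know that $(1-t)K_{h_0}+tK_{h_1}$ remains of class $C^2_+$, so that (\ref{I.0}) is applicable to it; this is exactly what the first step ensures via the convexity of the positive definite cone. Everything else is bookkeeping between the two normalizations of $W_i$ and $F_i$.
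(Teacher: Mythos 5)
Your proof is correct and is exactly the argument the paper intends: the paper derives the proposition directly from the representation (\ref{I.0}), the linearity of $h\mapsto h_{(1-t)K+tL}$, and the Brunn--Minkowski inequality (\ref{0.01}), without writing out further details. Your additional observation that $\mathcal{C}$ is convex (so that the Minkowski combination stays of class $C^2_+$ and (\ref{I.0}) applies to it) is a correct and worthwhile piece of bookkeeping that the paper leaves implicit.
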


\section{A lemma concerning Hessian operators on the sphere}
This section is devoted to prove the following result, which will be used in
the proofs of Theorems \ref{teo0.1} and \ref{teo0.2}.

\begin{lema}\label{lemmaII.1}
Let $u\in C^2(\sph)$, $k\in\{1,\dots,n-1\}$ and let $\{E_1,\dots
E_{n-1}\}$ be a local orthonormal frame of vector fields on $\sph$. Then, for
every $i\in\{1,\dots,n-1\}$,
\begin{equation*}
{\rm div}_j(S_k^{ij}(\nabla^2 u+uI)):=
\sum_{j=1}^{n-1}{\frac{\p}{\p E_j}S^{ij}_k(\nabla^2 u+uI)}=0\,,
\end{equation*}
where $\dfrac{\p}{\p E_j}$ denotes the covariant differential acting on $E_j$
and $I$ denotes the $(n-1)\times(n-1)$ identity matrix.
\end{lema}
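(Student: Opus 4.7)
The plan is to recognize that $T := \nabla^2 u + uI$ is a Codazzi tensor on $\mathbb{S}^{n-1}$, i.e.\ $T_{ab;j}=T_{aj;b}$, and then to combine this symmetry with the antisymmetry of the generalized Kronecker delta underlying $S^{ij,ab}_k$ to force the divergence to vanish. Morally, this is the spherical analogue of the classical Euclidean identity stating that the Newton cofactors of a Hessian are divergence-free: the Ricci commutator on $\mathbb{S}^{n-1}$ produces a curvature term that is precisely offset by the derivative of the $uI$ part of $T$.

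The first step is to expand the divergence by the chain rule,
\[
\sum_{j=1}^{n-1}\frac{\partial}{\partial E_j}\,S^{ij}_k(T) \;=\; \sum_{j,a,b} S^{ij,ab}_k(T)\,T_{ab;j},
\]
where $S^{ij,ab}_k(T):=\partial^2 S_k/(\partial T_{ij}\,\partial T_{ab})$ and $T_{ab;j}$ is the covariant derivative of $T_{ab}$ along $E_j$. The critical geometric ingredient is then the Codazzi identity $T_{ab;j}=T_{aj;b}$. Writing
\[
T_{ab;j}-T_{aj;b}=\bigl(u_{ab;j}-u_{aj;b}\bigr)+\bigl(u_j\delta_{ab}-u_b\delta_{aj}\bigr),
\]
one applies the Ricci commutator identity to the 1-form $du$; using that $\mathbb{S}^{n-1}$ has Riemann tensor $R_{abcd}=\delta_{ac}\delta_{bd}-\delta_{ad}\delta_{bc}$, the commutator contributes $u_{ab;j}-u_{aj;b}=u_b\delta_{aj}-u_j\delta_{ab}$, which exactly cancels the second bracket.

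With Codazzi in hand, the proof closes by an algebraic antisymmetry argument. Using the generalized Kronecker representation
\[
S^{ij,ab}_k(T)=\frac{1}{(k-2)!}\,\delta^{i\,a\,a_3\cdots a_k}_{j\,b\,b_3\cdots b_k}\,T_{a_3b_3}\cdots T_{a_k b_k}\qquad(k\ge 2)
\]
(the case $k=1$ is trivial since $S^{ij}_1(T)=\delta_{ij}$ is constant), I would swap the dummy indices $j\leftrightarrow b$ in the divergence expression: the Kronecker delta picks up a minus sign by antisymmetry in its lower indices, while by Codazzi the tensor factor $T_{ab;j}$ is replaced by $T_{aj;b}=T_{ab;j}$. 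Therefore the sum equals its own negative and must vanish.

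The main obstacle is the Codazzi step: the sign conventions in the Ricci commutator must be tracked with care so that the curvature term from $\mathbb{S}^{n-1}$ cancels the derivative of the $uI$ part of $T$ precisely. Once $T$ is established as Codazzi, the divergence-free conclusion is essentially combinatorial.
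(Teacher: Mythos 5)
Your proof is correct and follows essentially the same route as the paper: both rest on the generalized-Kronecker representation of the cofactor, the antisymmetry of its lower indices, and the Codazzi-type commutation identity $T_{ab;j}=T_{aj;b}$ for $T=\nabla^2 u+uI$ on $\sfe$ (the paper applies the relation $u_{rst}+u_t\delta_{rs}=u_{rts}+u_s\delta_{rt}$ to cancel paired terms; you package the same cancellation as a dummy-index swap in $\sum S^{ij,ab}_k(T)\,T_{ab;j}$). The only substantive difference is that you derive the Codazzi identity from the Ricci commutator and the curvature of $\sfe$, whereas the paper simply quotes it from Cheng--Yau.
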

The case $k=n-1$ of the preceding lemma was proved by Cheng and Yau in \cite{Cheng-Yau}
(see page 504). We also note that
an analogous result is valid in the Euclidean setting, with $(\nabla^2 u+uI)$
replaced by $\nabla^2 u$ (see for instance
\cite[Proposition 2.1]{Reilly} and \cite[\S 2.3]{Salani}). Our proof follows
the argument of \cite{Salani} for the Euclidean case and uses some standard tools from
differential geometry on $\sfe$.

\begin{proof} For $k\in\{0,1,\dots,N\}$, the $k$--th elementary symmetric
  functions of a symmetric $N\times N$ matrix $A=(a_{ij})$ can be written in
  the following way (see, for instance, \cite{Reilly})
\begin{equation*}
S_k(A)=\frac{1}{k}\sum{\delta\binom{i_1,\dots,i_k}{j_1,\dots,j_k}a_{i_1
j_1}\cdots a_{i_k j_k}}
\end{equation*}
where the sum is taken over all possible indices $i_r, j_r \in \{1,\dots,N\}$
  for $r=1,\dots,k$ and the Kronecker symbol
$\delta\binom{i_1,\dots,i_k}{j_1,\dots,j_k}$ equals $1$ (respectively, $-1$)
when $i_1,\dots,i_k$ are distinct and $(j_1,\dots,j_k)$ is an even
(respectively, odd) permutation of $(i_1,\dots,i_k)$; otherwise it is $0$.
Using the above equality we have
\begin{equation*}
S^{ij}_k(A)=\frac{1}{(k-1)!}\sum{\delta\binom{i,i_1,\dots,i_{k-1}}{j,j_1,\dots,j_{k-1}}a_{i_1
j_1}\cdots a_{i_{k-1} j_{k-1}}}\,.
\end{equation*}
Hence we can write
\begin{eqnarray}
\label{II.1}
&&(k-1)!\sum_{j=1}^{n-1}{\frac{\p}{\p E_j}S_k^{ij}(\nabla^2 u+uI)}=\\
&=&\sum_{j=1}^{n-1}\sum\delta\binom{i,i_1,\dots,i_{k-1}}{j,j_1,\dots,j_{k-1}}\frac{\p}{\p
E_j}
((u_{i_1 j_1}+u\delta_{i_1j_1})\cdots
(u_{i_{k-1}j_{\,k-1}}+u\delta_{i_{k-1}j_{\,k-1}}))\nonumber\\
&=&\sum_{j=1}^{n-1}\sum\delta\binom{i,i_1,\dots,i_{k-1}}{j,j_1,\dots,j_{\,k-1}}
[(u_{i_1 j_1 j}+u_j\delta_{i_1j_1})(u_{i_2j_2}+u\delta_{i_2j_2})\cdots
(u_{j_{k-1}i_{k-1}}+u\delta_{i_{k-1}j_{k-1}})+\nonumber\\
&&\cdots+(u_{i_1j_1}+u\delta_{i_1j_1})(u_{i_2j_2}+u\delta_{i_2j_2})\cdots
(u_{i_{k-1}j_{k-1}j}+u_j\delta_{i_{k-1}j_{k-1}})].\nonumber
\end{eqnarray}
In the last sum, for fixed $i_1,\dots i_{k-1},j_1,\dots j_{k-1},j$, let us
consider the terms
$$
A=\delta_1(u_{i_1j_1j}+u_j\delta_{i_1j_1})C\quad\mbox{and}\quad
B=\delta_2(u_{i_1jj_1}+u_{j_1}\delta_{i_1j_1})C\,,\;
$$ 
where
$$
\delta_1=\delta\binom{i,i_1,i_2,\dots,i_{k-1}}{j,j_1,j_2,\dots,j_{\,k-1}}\,,\;
\delta_2=\delta\binom{i,i_1,i_2,\dots,i_{k-1}}{j_1,j,j_2,\dots,j_{\,k-1}}\,,\;
$$
and
$$
C=(u_{i_2j_2}+u\delta_{i_2j_2})\cdots
(u_{j_{k-1}i_{k-1}}+u\delta_{i_{k-1}j_{k-1}})\,.
$$ 
Clearly $\delta_1=-\delta_2$. Moreover we have the following relation
concerning covariant derivatives on $\sfe$ (see, for instance, \cite{Cheng-Yau})
$$
u_{rst}+u_t\delta_{rs}=u_{rts}+u_s\delta_{rt}\,,\quad\forall\,
r,s,t=1,\cdots,n-1\,.
$$
Hence $A+B=0$. We have proved that to the term $A$ in the last sum in
(\ref{II.1}) it corresponds another term $B$, uniquely determined, which cancels out
with $A$. The same argument can be repeated for any other term of the sum and
this concludes the proof.
\end{proof}

\section{Proof of Theorems \ref{teo0.1} and \ref{teo0.2}}\label{IV}
In this section $K$ is a fixed convex body of class $C^2_+$ and $h$ is its
support function; in particular $h\in{\cal C}$. We recall that
$\Xi^{-1}=(h_{ij}+h\delta_{ij})$ and, for $k\in\{0,\dots,n-1\}$,
$$
F_k(h)=\int_{\sfe}h\,S_{n-k-1}(\Xi^{-1})\,d\ms\,.
$$
Note that if $\phi\in C^{\infty}(\sph)$ and $\epsilon$ is
sufficiently small, then $h+s\phi\in\mathcal{C}$ for
$|s|\le\epsilon$. We will denote by
$\Xi^{-1}_s$ the matrix $((h_s)_{ij}+h_s\delta_{ij})$.

\begin{prop}\label{propIV.1}
Let $k\in\{0,\dots,n-1\}$, $h\in \mathcal{C}$, $\phi\in C^{\infty}(\sph)$ and
$\epsilon>0$ be such that $h_s=h+s\phi\in \mathcal{C}$ for every
$s\in(-\epsilon,\epsilon)$. Let $f(s)=F_k(h_s)$. Then
\[
f'(s)=(n-k)\int_{\sph}{\phi\,S_{n-k-1}(\Xi^{-1}_s)d\mathcal{H}^{n-1}}\,,\quad
s\in(-\epsilon,\epsilon)\,.
\]
\end{prop}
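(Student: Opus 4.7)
The plan is to differentiate $f(s)=\int_{\sph} h_s\,S_{n-k-1}(\Xi^{-1}_s)\,d\ms$ under the integral sign and then reduce the resulting expression via two integration-by-parts on $\sph$, using Lemma \ref{lemmaII.1} to annihilate the divergence terms and property (iv) of Proposition \ref{propII.1} to recombine the remaining terms.

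First I would write $h_s=h+s\phi$, so that $\dfrac{d}{ds}\bigl((h_s)_{ij}+h_s\delta_{ij}\bigr)=\phi_{ij}+\phi\delta_{ij}$. The product rule, combined with $\dfrac{\p S_{n-k-1}}{\p a_{ij}}=S^{ij}_{n-k-1}$, gives
\begin{equation*}
f'(s)=\int_{\sph}\phi\,S_{n-k-1}(\Xi^{-1}_s)\,d\ms+\int_{\sph} h_s\sum_{i,j}S^{ij}_{n-k-1}(\Xi^{-1}_s)\bigl(\phi_{ij}+\phi\delta_{ij}\bigr)\,d\ms.
\end{equation*}
Call these two pieces $I_1(s)$ and $I_2(s)$. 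The claim is then equivalent to $I_2(s)=(n-k-1)\,I_1(s)$.

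Next I would integrate by parts in $I_2$. The part containing $\phi_{ij}$ is $\int_{\sph} h_s\,S^{ij}_{n-k-1}\,\phi_{ij}\,d\ms$. Since $\sph$ is closed, there are no boundary terms; integrating by parts twice (first moving one derivative off of $\phi$, then another), the boundary-free divergence theorem produces, in addition to the desired term $\int_{\sph}\phi\,S^{ij}_{n-k-1}(h_s)_{ij}\,d\ms$, only expressions of the form $\sum_j \frac{\p}{\p E_j}S^{ij}_{n-k-1}(\Xi^{-1}_s)$ multiplied against first derivatives of $\phi$ or $h_s$. By Lemma \ref{lemmaII.1} these vanish identically, so no spurious terms survive. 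Adding back the $\phi\delta_{ij}$ piece,
\begin{equation*}
I_2(s)=\int_{\sph}\phi\sum_{i,j}S^{ij}_{n-k-1}(\Xi^{-1}_s)\bigl((h_s)_{ij}+h_s\delta_{ij}\bigr)\,d\ms.
\end{equation*}
Now I invoke property (iv) of Proposition \ref{propII.1}, applied to the matrix $\Xi^{-1}_s$ with $r=n-k-1$: the inner sum equals $(n-k-1)\,S_{n-k-1}(\Xi^{-1}_s)$. Therefore $I_2(s)=(n-k-1)\,I_1(s)$ and $f'(s)=I_1(s)+I_2(s)=(n-k)\,I_1(s)$, which is the claimed formula.

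The main obstacle is the integration-by-parts step, where one must track the covariant derivatives on $\sph$ carefully: it is precisely the divergence-free identity of Lemma \ref{lemmaII.1}, together with the symmetry of $\phi_{ij}$ and of $(S^{ij}_{n-k-1})$, that guarantees no extra terms appear when transferring the two derivatives from $\phi$ onto $h_s$. The rest is algebraic, using only the Euler-type identity from Proposition \ref{propII.1}(iv).
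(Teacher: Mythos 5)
Your proposal is correct and follows essentially the same route as the paper: differentiate under the integral sign, use the double integration by parts justified by Lemma \ref{lemmaII.1} to convert $\int h_s\sum S^{ij}_{n-k-1}\phi_{ij}$ into $\int \phi\sum S^{ij}_{n-k-1}(h_s)_{ij}$, and then apply the Euler-type identity of Proposition \ref{propII.1}(iv) to obtain the factor $n-k-1$ and hence $f'(s)=(n-k)\int\phi\,S_{n-k-1}(\Xi^{-1}_s)\,d\mathcal{H}^{n-1}$. No gaps.
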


\begin{proof}
\begin{eqnarray}
\label{IV.1}
f'(s)&=&\int_{\sph}{\frac{\p}{\p
    s}\,h_s\,S_{n-k-1}(\Xi^{-1}_s)d\mathcal{H}^{n-1}}\nonumber\\
&=&
\int_{\sph}\left[\phi\,S_{n-k-1}(\Xi^{-1}_s)+h_s\,\frac{\p}{\p
    s}(S_{n-k-1}(\Xi^{-1}_s))\right]d\mathcal{H}^{n-1}\\
&=&\int_{\sph}\left[\phi\,S_{n-k-1}(\Xi^{-1}_s)+
h_s\,\sum_{i,j=1}^{n-1}{S_{n-k-1}^{ij}(\Xi^{-1}_s)(\phi_{ij}+\phi\delta_{ij})}\right]d\mathcal{H}^{n-1}\,.\nonumber
\end{eqnarray}
Integrating by parts twice and using Lemma \ref{lemmaII.1} we obtain
\begin{equation}
\label{IV.2}
\int_{\sph}{h_s\sum_{i,j=1}^{n-1}{S_{n-k-1}^{ij}(\Xi_s^{-1})\phi_{ij}}d\mathcal{H}^{n-1}}=\int_{\sph}
{\phi\sum_{i,j=1}^{n-1}{S_{n-k-1}^{ij}(\Xi_s^{-1})(h_s)_{ij}d\mathcal{H}^{n-1}}}\,.
\end{equation}
On the other hand, by (\ref{I.1})
\begin{equation}
\label{IV.3}
\sum_{i,j=1}^{n-1}S_{n-k-1}^{ij}(\Xi_s^{-1})((h_s)_{ij}+h_s\delta_{ij})=(n-k-1)\,S_{n-k-1}(\Xi_s^{-1})\,.
\end{equation}
The proof is completed inserting (\ref{IV.2}) and (\ref{IV.3}) in (\ref{IV.1}).
\end{proof}

The proof of the next result a straightforward consequence of Proposition \ref{propIV.1}.

\begin{prop}\label{propIV.2}
In the assumptions and notations of Proposition \ref{propIV.1}
\beq
\label{f'' expression}
f''(0)=(n-k)\int_{\sph}{\phi
\sum_{i,j=1}^{n-1}{S_{n-k-1}^{ij}(\Xi^{-1})(\phi_{ij}+\phi \delta_{ij})}}
d\mathcal{H}^{n-1}\,.
\eeq
\end{prop}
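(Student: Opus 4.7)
The plan is to start from the clean expression for $f'(s)$ obtained in Proposition \ref{propIV.1}, namely
$$
f'(s)=(n-k)\int_{\sph}\phi\,S_{n-k-1}(\Xi^{-1}_s)\,d\mathcal{H}^{n-1},
$$
and differentiate once more in $s$. Since $\phi$ and the measure $d\mathcal{H}^{n-1}$ do not depend on $s$, only $S_{n-k-1}(\Xi^{-1}_s)$ needs to be differentiated, and the factor $(n-k)$ passes through.

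Next I would compute $\frac{\p}{\p s}S_{n-k-1}(\Xi^{-1}_s)$ by the chain rule. The matrix $\Xi^{-1}_s=((h_s)_{ij}+h_s\delta_{ij})$ has entries that are linear in $s$, with $\frac{\p}{\p s}((h_s)_{ij}+h_s\delta_{ij})=\phi_{ij}+\phi\delta_{ij}$. By the very definition of $S^{ij}_{n-k-1}$ as the partial derivative of $S_{n-k-1}$ with respect to the $(i,j)$--entry, the chain rule yields
$$
\frac{\p}{\p s}S_{n-k-1}(\Xi^{-1}_s)=\sum_{i,j=1}^{n-1}S^{ij}_{n-k-1}(\Xi^{-1}_s)(\phi_{ij}+\phi\delta_{ij}).
$$

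Finally, I would substitute this into the expression for $f'(s)$ to obtain
$$
f''(s)=(n-k)\int_{\sph}\phi\sum_{i,j=1}^{n-1}S^{ij}_{n-k-1}(\Xi^{-1}_s)(\phi_{ij}+\phi\delta_{ij})\,d\mathcal{H}^{n-1},
$$
and then set $s=0$ (noting $\Xi^{-1}_0=\Xi^{-1}$) to recover formula \rf{f'' expression}.

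I do not expect any real obstacle here: all the work, in particular the integration by parts that relies on the divergence--free property of the cofactor matrices (Lemma \ref{lemmaII.1}), has already been absorbed into Proposition \ref{propIV.1}. The only point requiring minor care is the legitimacy of differentiating under the integral sign, which is immediate since $h_s\in\mathcal{C}$ for all $s\in(-\epsilon,\epsilon)$ implies $S_{n-k-1}(\Xi_s^{-1})$ is a polynomial in $s$ with coefficients in $C^0(\sph)$, so the integrand is jointly smooth and uniformly bounded on $\sph\times(-\epsilon,\epsilon)$.
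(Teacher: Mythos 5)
Your proof is correct and is exactly the argument the paper intends: the authors simply state that the proposition is a straightforward consequence of Proposition \ref{propIV.1}, and differentiating the formula for $f'(s)$ via the chain rule applied to $S_{n-k-1}(\Xi^{-1}_s)$, then setting $s=0$, is that straightforward consequence.
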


We are now ready to prove Theorems \ref{teo0.1} and \ref{teo0.2}; we begin
with the latter.

\begin{proof}[Proof of Theorem \ref{teo0.2}.] Without loss of generality we
  may assume that $\phi\in C^{\infty}(\sph)$.  Fix $\epsilon>0$ such that $h+s\phi\in
\mathcal{C}$ for $s\in(-\epsilon,\epsilon)$ and let $k=n-J-1$.
As above, we set $f(s)=F_k(h+s\phi)$ and define
$g(s)=f^{\frac{1}{n-k}}(s)$. We know from Proposition \ref{BM for quermass} that $g$ is a concave
function and so
\begin{equation*}
g''(0)=
\frac{1}{n-k}\left[\left(\frac{1}{n-k}-1\right)f(0)^{\frac{1}{n-k}-2}(f'(0))^{2}+
(f(0))^{\frac{1}{n-k}-1}f''(0)\right]\leq0\,.
\end{equation*}
Notice that, by Proposition \ref{propII.1}, the assumption \eqref{0.1} gives
exactly $f'(0)=0$, so the condition $g''(0)\leq 0$ becomes
$(f(0))^{\frac{1}{I}-1}f''(0)\leq 0$. Since $f(0)=W_k(K)>0$ it
follows $f''(0)\leq 0.$ Now \eqref{f'' expression} gives us
\[
\int_{\sph}{\phi^2
\sum_{i,j=1}^{n-1}{S_{J}^{ij}(\Xi^{-1})\delta_{ij}}d\mathcal{H}^{n-1}}\leq
-\int_{\sph}{\phi\sum_{i,j=1}^{n-1}{S_{J}^{ij}(\Xi^{-1})\phi_{ij}}d\mathcal{H}^{n-1}}\,.
\]
Integrating by parts in the right hand--side and using Lemma
\ref{lemmaII.1} we obtain
\[
\int_{\sph}{\phi\sum_{i,j=1}^{n-1}{S_{J}^{ij}(\Xi^{-1})\phi_{ij}}d\mathcal{H}^{n-1}}=
-\int_{\sph}{\sum_{i,j=1}^{n-1}{S_{J}^{ij}(\Xi^{-1})\phi_i\phi_j}d\mathcal{H}^{n-1}}
\]
and we are done with the aid of part {\em v)} of Proposition \ref{propII.1}.
\end{proof}
For the proof of Theorem \ref{teo0.1} we need the following auxiliary result.

\begin{lema}\label{rhs} Let $\phi\in C^\infty(\sfe)$ and
  $\psi(x)=\phi(\nu(x))$, $x\in\partial K$, where $\nu$ is the Gauss map of
  $K$. Fix $r\in\{1,\dots,n-1\}$. Then for every $y\in\sfe$
\[
\frac{1}{\det({\Xi^{-1}(y)})}
\left<(S_{r}^{ij}(\Xi^{-1}(y)))\nabla\phi(y),\nabla\phi(y)\right>
=
\left<((D\nu(x))^{-1}(\nabla\psi(x)),S_{n-r}^{ij}(\Xi(x))\nabla\psi(x))\right>\,,
\]
where $x=\nu^{-1}(y)$ and $\Xi(x)=D\nu(x)$.
\end{lema}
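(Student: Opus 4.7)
The approach is to use the Gauss map change of variables to convert the identity into a purely algebraic statement about the reverse second fundamental form, and then verify it in a principal orthonormal frame. First, since $\psi=\phi\circ\nu$ and the Weingarten map $D\nu(x)=\Xi(x)$ is self-adjoint on the common tangent space $T_x\partial K = T_y\sfe$, the chain rule gives $\nabla\psi(x)=\Xi(x)\,\nabla\phi(y)$, so that
\[
\nabla\phi(y)=\Xi(x)^{-1}\nabla\psi(x)=\Xi^{-1}(y)\,\nabla\psi(x).
\]
In particular $(D\nu(x))^{-1}\nabla\psi(x)=\nabla\phi(y)$, which already identifies the first slot of the right-hand side of the lemma. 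Writing $A=\Xi^{-1}(y)$ (so that $\Xi(x)=A^{-1}$) and substituting $\nabla\psi=A^{-1}\nabla\phi$ into the right-hand side, the claim becomes
\[
\frac{1}{\det A}\left<S_r^{ij}(A)\nabla\phi,\nabla\phi\right>=\left<\nabla\phi,\,S_{n-r}^{ij}(A^{-1})\,A^{-1}\nabla\phi\right>,
\]
so it is enough to prove the pointwise matrix identity $\det(A)\,S_{n-r}^{ij}(A^{-1})\,A^{-1}=S_r^{ij}(A)$.

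To establish this matrix identity I would fix a principal orthonormal frame at $y$ diagonalizing $A$, with eigenvalues $\lambda_1,\dots,\lambda_{n-1}$ (the principal radii of curvature). By Proposition~\ref{propII.1}(i) both $S_r^{ij}(A)$ and $S_{n-r}^{ij}(A^{-1})$ are diagonal in this frame, so it suffices to compare $s$-th diagonal entries. Proposition~\ref{propII.1}(ii) gives the $s$-th entries as $S_{r-1}(\hat\lambda_s)$ and $S_{n-r-1}(\widehat{1/\lambda_s})$ respectively, where the hat indicates omission of the $s$-th eigenvalue. The identity then reduces to the elementary equality $\frac{\det(A)}{\lambda_s}\,S_{n-r-1}(\widehat{1/\lambda_s})=S_{r-1}(\hat\lambda_s)$ for each $s$. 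This is a direct combinatorial check: each monomial $\prod_{i\in I}(1/\lambda_i)$ with $|I|=n-r-1$ and $s\notin I$, when multiplied by $\prod_{j\neq s}\lambda_j = \det(A)/\lambda_s$, becomes $\prod_{j\in J}\lambda_j$ with $J=\{1,\dots,n-1\}\setminus(I\cup\{s\})$ of cardinality $r-1$ and $s\notin J$; summing over such $I$ yields $S_{r-1}(\hat\lambda_s)$. One can view this as a pointwise refinement of Proposition~\ref{propII.1}(iii).

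The main obstacle is conceptual rather than technical: one must identify $T_x\partial K$ with $T_y\sfe$ and keep straight that the Weingarten map $\Xi(x)$ on the body side and the reverse second fundamental form $\Xi^{-1}(y)$ on the sphere side are mutually inverse endomorphisms of this common tangent space, so that $S_{n-r}^{ij}(\Xi(x))=S_{n-r}^{ij}(A^{-1})$ pairs with $S_r^{ij}(A)$ through the inverse relation above. Once this identification is in place and the substitution $\nabla\psi = A^{-1}\nabla\phi$ is made, the lemma collapses to the diagonalization argument of the preceding paragraph.
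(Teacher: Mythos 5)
Your proposal is correct and follows essentially the same approach as the paper's proof: both diagonalize $\Xi^{-1}(y)$, use Proposition~\ref{propII.1} to identify the eigenvalues of the cofactor matrices, and reduce to the elementary identity $\prod_{j\neq s}\lambda_j\,S_{n-r-1}(\widehat{1/\lambda_s})=S_{r-1}(\hat\lambda_s)$. The only cosmetic difference is that you first isolate the pointwise matrix identity $\det(A)\,S^{ij}_{n-r}(A^{-1})A^{-1}=S^{ij}_r(A)$ and then diagonalize, whereas the paper diagonalizes directly inside the chain of equalities for the bilinear form.
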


\begin{proof}
We may assume that $\Xi^{-1}(y)$ is diagonal:
$$
\Xi^{-1}(y)={\rm diag}(\lambda_1,\dots,\lambda_{n-1})\,,\quad
\lambda_i>0\,,\;i=1,\dots,n-1\,.
$$
Then
$$
D\nu(x)={\rm diag}(\mu_1,\dots,\mu_{n-1})\,,\quad
\mu_i=\frac{1}{\lambda_i}\,,\;i=1,\dots,n-1\,.
$$
In particular
\begin{equation}\label{IV.4}
\nabla\psi(x)=D\nu(x)\nabla\phi(\nu(x))=\sum_{i=1}^{n-1}\mu_i\phi_i(y)\,.
\end{equation}
By Proposition \ref{propII.1} the matrix $(S_r^{ij}(\Xi^{-1}(y)))$ is also
diagonal and its eigenvalues are given by
$$
\Lambda_s=S_{r-1}({\rm diag}(\lambda_1,\dots,\lambda_{s-1},\lambda_{s+1},\dots,\lambda_{n-1}))\,,\quad
s=1,\dots,n-1\,.
$$
Using again Proposition \ref{propII.1} we get
\begin{eqnarray*}
\frac{\sum_{i,j=1}^{n-1}{S_{r}^{ij}(\Xi^{-1}(y))\phi_i(y)\phi_j(y)}}{\det({\Xi^{-1}(y)})}
&=&\sum_{i=1}^{n-1}{\frac{\Lambda_i}{\det(\Xi^{-1})}\,\phi_i^2(y)}\\
&=&\sum_{i=1}^{n-1}\mu_iS_{n-r-1}({\rm
  diag}(\mu_1,\dots,\mu_{i-1},\mu_{i+1},\dots,\mu_{n-1}))\,
\phi_i^2(y)\\
&=&\sum_{i=1}^{n-1}\mu_iS^{ii}_{n-r}(D\nu(x))
\phi_i^2(y)\\
&=&\langle\nabla\psi(x),(S^{ij}_{n-r}(D\nu(x)))\nabla\phi(y)\rangle\,.
\end{eqnarray*}
The conclusion of the lemma follows from the first equality in (\ref{IV.4})
and the symmetry of the matrix $(S^{ij}_{n-r}(D\nu(x)))$.
\end{proof}

\begin{proof}[Proof of Theorem \ref{teo0.1}.] We set $\phi(y)=\psi(\nu^{-1}(y))$,
  $y\in\sfe$. Consider the map $\nu^{-1}\,:\,\sfe\to\partial K$; its Jacobian is given by
$$
\det(D(\nu^{-1})(y))=\det(\Xi^{-1}(y))>0\,,\quad\forall y\in\sfe\,.
$$
Moreover, by Proposition \ref{propII.1} we have that for every $r\in\{0,1,\dots,n-1\}$,
$$
S_{r}(D\nu(\nu^{-1}(y)))=\frac{S_{n-r-1}(\Xi^{-1}(y))}{\det(\Xi^{-1}(y))}\,,\quad\forall
y\in\sfe\,.
$$
Hence we can write
\begin{eqnarray*}
\int_{\p K}{\psi S_{I-1}(D\nu)d\mathcal{H}^{n-1}}&=&
\int_{\sph}{\phi S_{n-I}(\Xi^{-1})d\mathcal{H}^{n-1}}\,,\\
\int_{\p K}{\psi^2 S_I(D\nu)d\mathcal{H}^{n-1}}&=&
\int_{\sph}{\phi^2 S_{n-I-1}(\Xi^{-1})d\mathcal{H}^{n-1}}\,.
\end{eqnarray*}
And, by Lemma \ref{rhs},
$$
\int_{\p
K}{\left<S_{I}^{ij}(D\nu)\nabla \psi,(D\nu)^{-1}\nabla \psi
\right>}d\mathcal{H}^{n-1}=
\int_{\sfe}\left<(S_{n-I}^{ij}(\Xi^{-1}))\nabla\phi,\nabla\phi\right>
d\mathcal{H}^{n-1}\,.
$$
The proof is completed applying Theorem \ref{teo0.2} with $J=n-I$.
\end{proof}

\begin{remark}\label{ossIII.1} {\em With the notation of the proof of Theorem
  \ref{teo0.2}, let $\phi(y)=\left<y_0,y\right>$, where
  $y_0\in\mathbb{S}^{n-1}$ is fixed. Note that condition (\ref{0.1}) is
  verified as
$$
\int_{\sph}{y S_{J}(h_{ij}(y)+h\delta_{ij}(y))
d\mathcal{H}^{n-1}}= \int_{\sph}y\,dA_J(K,y)\,,
$$
where $A_J(K,\cdot)$ is the $J$--th area measure of $K$ (see
\cite{Schneider} or the next section for the definition), and the
latter integral is zero by standard properties of area measures.
Moreover, for every $s$, $h+s\phi$ is the support function of a
translate of $K$ . Since quermassintegrals are invariant with
respect to translations, the function $f$ is constant in 
particular $f''(0)=0$. This proves that if $\phi$ is as above we
have equality in (\ref{0.1b}). Analogously, choosing
$\psi(x)=\left<x_0,\nu(x)\right>$ where $0\ne x_0\in\R^n$ is
fixed, we see that condition (\ref{0.02}) of Theorem \ref{teo0.1}
is fulfilled and (\ref{0.03}) becomes an equality.}
\end{remark}

\section{The case $J=1$: the proof of Theorems \ref{teo0.3} and \ref{teo0.4}}\label{V}
\label{IV}
We start this section recalling the definition of area measures; for a
detailed presentation of this topic we refer the reader to \cite[Chapter 5]{Schneider}. If
$K_1,\dots,K_m$, $m\in\mathbb{N}$, are convex bodies in $\R^n$ and
$\lambda_1,\dots,\lambda_m$ are non--negative real numbers, then we have:
$$
{\cal
  H}^n(\lambda_1K_1+\dots+\lambda_mK_m)=\sum_{i_1,\dots,i_n=1}^m\,\lambda_{i_1}\cdots\lambda_{i_n}
V(K_{i_1},\dots,K_{i_n})\,.
$$
The coefficients of the polynomial at the right hand--side are called {\it
  mixed volumes}. Moreover, if we fix $(n-1)$ convex bodies
$K_2,\dots,K_{n}$, there exists a unique non--negative Borel measure
$A(K_2,\dots,K_n,\cdot)$ (called {\it mixed area measure}) such that for every
convex body $K_1$
$$
V(K_1,K_2,\dots,K_n)=\int_{\mathbb{S}^{n-1}}h_{K_1}(x)\,dA(K_2,\dots,K_n,x)\,.
$$
For $j=1,\dots,n-1$, the {\it area measure} of order $j$ of a convex body $K$
is obtained in the following way: $A_j(K,\cdot)=A(K,\dots,K,B,\dots,B,\cdot)$, where $K$
is repeated $j$ times and $B$ is the unit ball in $\R^n$. An alternative definition of area
measures is based on a local version of the Steiner formula (see \cite[Chapter
4]{Schneider}). In particular, the 
area measure of order one of $K$ is $A_1(K,\cdot)=A(K,B\dots,B,\cdot)$. If $K$ is of class
$C^2_+$, then it can be proved that
\begin{equation}
\label{IV.0}
dA_1(K,x)=\frac{1}{n-1}S_1((h_K)_{ij}(x)+h_K(x)\delta_{ij})d{\cal
H}^{n-1}(x)\,.
\end{equation}
Hence condition
(\ref{0.1}) is equivalent to (\ref{0.2}) when $h$ is the support function of a
convex body of class $C^2_+$.

\begin{proof}[Proof of Theorem \ref{teo0.3}] We may assume that $\phi\in
  C^\infty(\mathbb{S}^{n-1})$. $K$ can be
approximated by a sequence $K_r$, $r\in\mathbb{N}$, such that for
every $r,$ $K_r$ is of class $C^2_+$ and $(K_r)_{r\in\N}$
converges to $K$ in the Hausdorff metric as $r$ tends to infinity.
Fix $r\in\mathbb{N}$ and let $h_r$ be the
support function of $K_r$. For $s$ sufficiently small in absolute
value, consider the function
$$
f_r(s)=\int_{\sfe}(h_r+s\phi)S_1((h_r+s\phi)_{ij}+(h_r+s\phi)\delta_{ij})\,d{\cal H}^{n-1}\,.
$$
By Proposition \ref{BM for quermass}, $\sqrt{f_r}$ is concave so
that $2f_r(0)f_r''(0)-(f_r'(0))^2\le0$. Using (\ref{I.0}),
Propositions \ref{propIV.1} and \ref{propIV.2} (with $k=n-2$) and
the relation $(S^{ij}_1)=(\delta_{ij})$, we obtain
\begin{equation}\label{V.1}
\frac{2n}{n-2}W_{n-2}(K_r)\int_{\sfe}\phi\left((n-1)\phi+\sum_{i=1}^{n-1}\phi_{ii}\right)\,d{\cal H}^{n-1}\le
\left(\int_{\sfe}\phi S_1((h_r)_{ij}+h_r\delta_{ij})\,d{\cal H}^{n-1}\right)^2\,.
\end{equation}
From (\ref{IV.0}) we know that
$$
\int_{\sfe}\phi S_1((h_r)_{ij}+h_r\delta_{ij})\,d{\cal H}^{n-1}=
(n-1)\int_{\sfe}\phi(x)\,dA_1(K_r,x)\,,
$$
where $A_1(K_r,\cdot)$ is the first area measure of $K_r$.
Moreover, as $r$ tends to infinity the sequence of measures
$A_1(K_r,\cdot)$ converges weakly to $A_1(K,\cdot)$ (see
\cite[Theorem 4.2.1]{Schneider}). This implies
\begin{equation}
\label{IV.1b}
\lim_{r\to\infty}\int_{\sfe}\phi(x)\,dA_1(K_r,x)=
\int_{\sfe}\phi(x)\,dA_1(K,x)=0\,.
\end{equation}
On the other hand $W_{n-2}(K_r)$ converges to $W_{n-2}(K)$ as $r$ tends to
infinity (by standard continuity results on quermassintegrals) and
$W_{n-2}(K)>0$ as $K$ has interior points. The conclusion follows letting
$r\to\infty$ in (\ref{V.1}), using (\ref{IV.1b}) and integrating by parts.
\end{proof}

As mentioned in the Introduction, Theorem \ref{teo0.3} extends the usual
(sharp) Poincar\'e inequality (\ref{0.1c}) on $\sfe$ when the usual
zero--mean condition is replaced by (\ref{0.2}). Clearly, in order to apply
this result it would be useful
to understand when a measure $\mu$ on $\sfe$ is the area measure
of order one of some convex body. This amounts to solve the
Christoffel problem for $\mu$ (see for instance \cite[\S
4.3]{Schneider}). For $n=2$ this problem coincides with the
Minkowski problem and its solution is completely understood. Let
$\mu$ be a non--negative Borel measure on $\mathbb{S}^1$ such that:
{\it i)} $\mu$ is not the sum of two point--masses; {\it ii)}
$$
\int_{\mathbb{S}^1}x\,d\mu(x)=0\,.
$$
Then there exists a convex body $K$ in $\R^2$ such that
$A_1(K,\cdot)=\mu(\cdot)$ (note that conditions {\it i)} and {\it ii)} are
also necessary in order that $\mu$ is the area measure of order one of some convex
body). Hence we have the following extension of the well known
{\it Wirtinger inequality}.

\begin{cor}\label{cor dim2} Let $\nu$ be a non--negative Borel measure on $[0,2\pi]$ such that
$\nu$ is not the sum of two point--masses and
$$
\int_0^{2\pi}\sin\theta\,d\nu(\theta)=\int_0^{2\pi}\cos\theta\,d\nu(\theta)=0\,.
$$
Then, for every $\phi\in C^1([0,2\pi])$ such that $\phi(0)=\phi(2\pi)$
$$
\int_0^{2\pi}\phi(\theta)\,d\nu(\theta)=0\
\quad\Rightarrow\quad
\int_0^{2\pi}(\phi(\theta))^2\,d\theta\le\int_0^{2\pi}(\phi'(\theta))^2\,d\theta\,.
$$
\end{cor}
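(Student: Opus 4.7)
The plan is to reduce the statement to Theorem \ref{teo0.3} in the case $n=2$ by producing a convex body $K \subset \R^2$ whose area measure of order one is precisely $\nu$. Once such a $K$ is in hand, the conclusion of Theorem \ref{teo0.3} will read exactly as the claimed Wirtinger-type inequality, because the spherical gradient on $\mathbb{S}^1$ in the angular coordinate is just $\phi'(\theta)$, the surface measure $d\mathcal{H}^1$ is $d\theta$, and the constant $\frac{1}{n-1}$ equals $1$.

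First I would identify $\mathbb{S}^1$ with $[0,2\pi]/\{0\sim 2\pi\}$ by $\theta\mapsto(\cos\theta,\sin\theta)$. Under this identification the hypothesis
\[
\int_0^{2\pi}\sin\theta\,d\nu(\theta)=\int_0^{2\pi}\cos\theta\,d\nu(\theta)=0
\]
becomes $\int_{\mathbb{S}^1}x\,d\nu(x)=0$, which together with the assumption that $\nu$ is not a sum of two point--masses is precisely the (necessary and sufficient) condition, recalled in the paragraph preceding the corollary, for $\nu$ to arise as $A_1(K,\cdot)$ for some planar convex body $K$. Invoking that solution of the planar Christoffel/Minkowski problem produces the desired $K$.

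Next, given $\phi\in C^1([0,2\pi])$ with $\phi(0)=\phi(2\pi)$, I would view $\phi$ as a continuous function on $\mathbb{S}^1$. To apply Theorem \ref{teo0.3} we need $\phi \in C^1(\mathbb{S}^1)$, so I would first regularise: approximate $\phi$ by a sequence of functions $\phi_\varepsilon\in C^\infty(\mathbb{S}^1)$ obtained by mollification against a periodic bump, so that $\phi_\varepsilon\to\phi$ uniformly and $\phi_\varepsilon'\to\phi'$ in $L^2(0,2\pi)$. The zero--mean condition $\int\phi\,d\nu=0$ is not preserved by the $\phi_\varepsilon$, but it may be restored by subtracting a small constant $c_\varepsilon\to0$, which does not affect the integrals of $(\phi_\varepsilon-c_\varepsilon)^2$ or $((\phi_\varepsilon-c_\varepsilon)')^2$ in the limit. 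Applying Theorem \ref{teo0.3} to each $\phi_\varepsilon-c_\varepsilon$ with the body $K$ and passing to the limit then yields the stated inequality.

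The only genuinely non--routine step is the appeal to the Minkowski problem in the plane; the rest is just a matter of correctly translating Theorem \ref{teo0.3} to polar coordinates and taking care of the density argument used to upgrade $C^1([0,2\pi])$ with matching endpoint values to $C^1(\mathbb{S}^1)$. In particular, no differential geometry beyond $n=2$ is needed, and the factor $\frac{1}{n-1}=1$ is what turns Theorem \ref{teo0.3} into the sharp Wirtinger constant.
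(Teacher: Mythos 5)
Your proof is correct and follows exactly the route the paper takes: the hypotheses on $\nu$ are precisely the solvability conditions for the planar Minkowski problem, which produces a convex body $K\subset\R^2$ with $A_1(K,\cdot)=\nu$, and then Theorem~\ref{teo0.3} with $n=2$ (so $d\mathcal{H}^1=d\theta$, $|\nabla\phi|=|\phi'|$, and $\tfrac{1}{n-1}=1$) gives the Wirtinger-type inequality. The one point you handle more carefully than the paper is the upgrade from $C^1([0,2\pi])$ with matching endpoint \emph{values} (but not necessarily derivatives) to genuine $C^1(\mathbb{S}^1)$ test functions; the paper passes over this silently, while your mollification-plus-constant-shift argument fills the gap cleanly (note it uses $\nu(\mathbb{S}^1)>0$ to solve for $c_\varepsilon$, which is guaranteed since $\nu=0$ is excluded).
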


In higher dimension the Christoffel problem is more complicated.
Necessary and sufficient conditions for a measure $\mu$ to be the
first area measure of some convex body were found by Firey
\cite{Firey} and Berg \cite{Berg} (see also \cite[\S
4.2]{Schneider}). On the other hand these conditions are not easy
to use in practice. A considerable progress (in a larger class of
problems) has been made by Guan and Ma in \cite{Guan-Ma} and Sheng,
Trudinger and Wang in \cite{Sheng-Trudinger-Wang} where a rather
simple sufficient condition is found. Here we state this result in the case of
area measures of order one.

\begin{theorem}[{\bf Guan, Ma, Sheng, Trudinger, Wang}]\label{teoV.1} Let $f\in
  C^{1,1}(\sfe)$, $f>0$ and let $g=1/f$. If
$$
\int_{\sfe} x f(x)d\mathcal{H}^{n-1}(x)=0\,,
$$
and the matrix $(g_{ij}+g\delta_{ij})$ is positive semi--definite a.e. on
$\sfe$, then there exists a convex body $L$, uniquely determined up to
translations, such that
$$
dA_1(L,\cdot)=f(\cdot)\,d\mathcal{H}^{n-1}(\cdot)\,,
$$
i.e. $f$ is the density of $S_1(K,\cdot)$ with respect to
$\mathcal{H}^{n-1}(\cdot)$.
\end{theorem}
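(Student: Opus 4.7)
The plan is to convert the Christoffel problem into a linear elliptic PDE on $\sfe$, solve it by Fredholm theory, and then use the hypothesis on $g=1/f$ to verify that the solution is the support function of a convex body. This is precisely the programme carried out in \cite{Guan-Ma} and \cite{Sheng-Trudinger-Wang}; I sketch the three stages.

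First, by the integral representation (\ref{IV.0}), if $L$ is a $C^2_+$ convex body with support function $h$, the density equation $dA_1(L,\cdot)=f\,d\ms$ is equivalent to $S_1(h_{ij}+h\delta_{ij})=(n-1)f$ on $\sfe$. Since $S_1(h_{ij}+h\delta_{ij})=\mathrm{tr}(h_{ij}+h\delta_{ij})=\Delta_{\sfe}h+(n-1)h$, the problem reduces to
$$
\Delta_{\sfe}h+(n-1)h=(n-1)f\quad\mbox{on}\;\sfe,
$$
together with the additional, non-linear requirement that $W:=(h_{ij}+h\delta_{ij})$ be positive semi-definite.

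For the PDE itself I would use Fredholm theory for the self-adjoint operator $\mathcal{L}=\Delta_{\sfe}+(n-1)I$. The spectrum of $-\Delta_{\sfe}$ on the sphere consists of the eigenvalues $k(k+n-2)$, $k=0,1,2,\dots$, with eigenspaces the spherical harmonics of degree $k$; hence $\ker\mathcal{L}$ corresponds to $k=1$, namely the restrictions of linear functions $u\mapsto\langle a,u\rangle$ with $a\in\R^n$. The Fredholm alternative therefore gives solvability iff $f\perp\ker\mathcal{L}$ in $L^2(\sfe)$, which is exactly the stated condition $\int_{\sfe}xf\,d\ms=0$. Elliptic regularity applied to $f\in C^{1,1}$ yields $h\in C^{3,\alpha}$ for every $\alpha<1$. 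Uniqueness modulo $\ker\mathcal{L}$ is precisely the translational ambiguity $h_{L+a}=h_L+\langle a,\cdot\rangle$, which accounts for the ``unique up to translations'' statement.

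The main obstacle, and the real content of the theorem, is showing $W\succeq 0$; the PDE controls only $\mathrm{tr}(W)=(n-1)f>0$, which is far from positivity of the full matrix. Here is where the hypothesis on $g=1/f$ enters. The strategy is a continuity method: interpolate between $f_0\equiv c$ (for which $h_0\equiv c$ and $W_0=cI\succ 0$) and $f_1=f$ by a family $f_t$, $t\in[0,1]$, chosen so that each $f_t$ satisfies the first-moment condition and the semi-definiteness condition for $1/f_t$; solve $\mathcal{L}h_t=(n-1)f_t$ at each $t$ and track $W_t$. Openness of $\{t:W_t\succ 0\}$ is a routine implicit-function-theorem argument once ellipticity holds. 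Closedness, i.e.\ the preservation of $W_t\succeq 0$ in the limit, is the delicate analytic step: it is handled by a constant-rank / maximum-principle argument applied to the smallest eigenvalue $\lambda_{\min}(W_t)$. At a point where $\lambda_{\min}(W_t)$ would first become zero, one derives a second-order differential inequality in which the sign of the offending terms is controlled by $(g_{ij}+g\delta_{ij})$; the assumed positive semi-definiteness of this matrix is exactly what is needed to rule out $\lambda_{\min}(W_t)<0$, closing the continuity argument. A standard approximation (mollifying $f$ and passing to the limit in the Hausdorff metric, using continuity of $A_1(\cdot,\cdot)$) recovers the a.e.\ semi-definite case from the strictly positive one. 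Once $W\succeq 0$ is established, $h$ is the support function of a unique convex body $L$ (up to translation) and, by construction, $dA_1(L,\cdot)=f\,d\ms$.
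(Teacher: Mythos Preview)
The paper does not prove this theorem at all: it is quoted as an external result, with attribution to \cite{Guan-Ma} and \cite{Sheng-Trudinger-Wang}, and is then used as a black box in the proof of Theorem~\ref{teo0.4}. So there is no ``paper's own proof'' to compare your proposal against.

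That said, your sketch is a faithful outline of the argument actually contained in those references. The reduction to the linear equation $\Delta_{\sfe}h+(n-1)h=(n-1)f$, the Fredholm solvability under the first-moment condition, and the identification of the kernel with translations are all standard and correct. The substantive point---that positivity of $(h_{ij}+h\delta_{ij})$ does \emph{not} follow from the PDE alone and requires the hypothesis on $g=1/f$---is exactly the content of \cite{Guan-Ma}, and your description of the mechanism (continuity method combined with a constant-rank theorem for the Hessian, in which the semi-definiteness of $(g_{ij}+g\delta_{ij})$ controls the sign of the bad terms at a putative degeneracy point) is accurate. One small caution: in the Guan--Ma argument the constant-rank theorem is the central technical tool and is somewhat more delicate than a straightforward maximum-principle bound on $\lambda_{\min}$; your phrasing slightly understates this, but as a sketch it is fine.
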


Using the above result and Theorem \ref{teo0.3}, we now proceed to show Theorem
\ref{teo0.4}.

\begin{proof}[Proof of Theorem \ref{teo0.4}.]
We recall that the radial function $\rho_K$ of $K$ is defined as
$\rho_K(x)=\max\{\lambda\ge0\,|\,\lambda x\in K\}$. Let $H$ be the polar body
of $K$:
$$
H=\{x\in\mathbb{R}^n\,:\,\left<x,y\right>\le1\,,\;\forall y\in K\}\,.
$$
$H$ is still a convex body and the origin belongs to its interior. Note that (see for
instance \cite[Remark 1.7.7]{Schneider})
$$
\rho_K=\frac{1}{h_{H}}\,,\quad\mbox{on $\sfe$.}
$$
Let $H_r$, $r\in\mathbb{N}$, be a sequence of convex bodies
converging to $H$ in the Hausdorff metric as $r$ tends to
infinity, such that each $H_r$ is of class $C^2_+$. By hypothesis
(\ref{0.4}) we may assume that
$$
\int_{\sfe}x\,\frac{1}{h_{H_r(x)}}d\mathcal{H}^{n-1}(x)=0\quad\forall
r\in\mathbb N\,.
$$
Setting $h_r=h_{H_r}$ we have that $h_r\to h_H$
uniformly on $\sfe$ and
\begin{equation}\label{V.2}
((h_r)_{ij}+h_r\delta_{ij})>0\quad\mbox{on $\sfe$ for every $r\in\N$.}
\end{equation}
Hence for every $r\in\mathbb{N}$ we can apply Theorem \ref{teoV.1} with
$f=f_r=1/h_r$, obtaining a convex body $L_r$ such that
$$
dA_1(L_r,\cdot)=f_r(\cdot)\,d\mathcal{H}^{n-1}(\cdot)\,.
$$
As $H$ is a convex body with interior points, we have that
$c<h_H<C$ on $\sfe$, for suitable positive constants $c$ and $C$.
Using the uniform convergence we obtain that there exist $d,D>0$
such that $d\le f_r(x)\le D$, $\forall\,x\in\sfe$, $\forall
r\in\mathbb{N}$. Hence we may apply Lemma 3.1 in \cite{Guan-Ma} to
deduce that the sequence $L_r$ is bounded and by the Blaschke
selection theorem (see \cite[Theorem 1.8.6]{Schneider}), up to a
subsequence, it converges to a convex body $L$ in the Hausdorff
metric. As already noticed in the proof of Theorem \ref{teo0.3},
the sequence of measures $A_1(L_r,\cdot)$ converges weakly to
$A_1(L,\cdot)$ as $r$ tends to infinity. Consequently
$$
dA_1(L,\cdot)=\frac{1}{h_H(\cdot)}\,d\mathcal{H}^{n-1}(\cdot)=\rho_K(\cdot)d\mathcal{H}^{n-1}(\cdot)\,.
$$
The conclusion follows applying Theorem \ref{teo0.3}.
\end{proof}

\bigskip

\noindent
{\sc Andrea Colesanti}\\
Dipartimento di Matematica 'U. Dini' -- Universit\`a di Firenze\\
Viale Morgagni 67/a\\
50134 Firenze, Italy\\
{\tt colesant@math.unifi.it}

\medskip

\noindent
{\sc Eugenia Saor\'{\i}n G\'{o}mez}\\
Departamento de Matematicas -- Universidad de Murcia \\
Campus de Espinardo\\
30100 Murcia, Spain\\
{\tt esaorin@um.es}

\end{document}